\documentclass[12pt]{article}
\usepackage[english]{babel}

\usepackage[letterpaper]{geometry}
\usepackage{pdfpages}
\usepackage{geometry} 
\usepackage{amsmath}  
\usepackage{graphicx} 
\usepackage{float}
\usepackage{color}
\usepackage{amsmath}
\usepackage{amssymb}
\usepackage{amsthm}
\usepackage{array}
\usepackage{xcolor}
\usepackage{float}
\usepackage{bm}

\usepackage{amsmath}
\usepackage{graphicx}
\usepackage[colorlinks=true, allcolors=blue]{hyperref}

\newtheorem{theorem}{Theorem}

\theoremstyle{definition}

\theoremstyle{lemma}

\theoremstyle{remark}

\theoremstyle{boldremark}

\newtheorem{corollary}[theorem]{Corollary}

\title{Revisit escape path for infinite unit strip forest and unit broadworm}
\author{Zhipeng Deng}
\date{} 
\begin{document}
\maketitle

\begin{abstract}
Building on our previous general computational solution to Bellman’s Lost-in-a-Forest Problem, we present a new approach and analytical formulas for the previously well-known escape path for the infinite unit-strip forest and unit broadworm by Zalgaller. Earlier studies addressed these problems exclusively through geometric methods. We reformulated the problem as an interval-cover problem and then formulated it as a constrained functional minimization problem. This constrained functional minimization can be directly discretized and subsequently solved as a convex optimization. Furthermore, we extend the analysis of various line segment. Finally, we show that, in the case of a closed escape path for the unit strip, the optimal solution is a curve of constant unit width.
\end{abstract}

\noindent \textbf{Keywords:} 
Bellman’s lost-in-a-forest problem, Moser’s worm problem, Calculus of variations, Convex optimization, Discrete geometry.
\\

\noindent \textbf{Classification}

Optimization and Control (math.OC)

Metric Geometry (math.MG)

Discrete Mathematics (cs.DM)

Computational Geometry (cs.CG)

49K30 (Optimal Solutions in Calculus of Variations)

49Q10 (Optimization of Shapes Other Than Minimal Surfaces)

52A40 (Inequalities and Extremum Problems)

\tableofcontents

\section{Introduction}
Bellman’s Lost-in-a-Forest Problem is a challenging unsolved minimization problem introduced by Richard E. Bellman \cite{Finch2004} \cite{Ward2008}. It is typically assumed that the hiker knows neither the starting location nor the initial facing orientation.

In the earliest forms, Bellman’s problem sought the shortest escape path from an infinite unit strip, defined as the region between two parallel lines. A solution was described by Zalgaller in 1961 \cite{Zalgaller2005}. He showed that the shortest escape path is a symmetric curve consisting of four line segments and two circular arcs. Its length is given by the following analytical expression \cite{Finch2004}:

\begin{equation}
\varphi = \arcsin \left( \frac{1}{6} + \frac{4}{3} \sin \left( \frac{1}{3} \arcsin \frac{17}{64} \right) \right),
\end{equation}
\begin{equation}
\psi = \arctan \left( \frac{1}{2} \sec \varphi \right).
\end{equation}
\begin{equation}
\ell_0 = 2 \left( \frac{\pi}{2} - \varphi - 2\psi + \tan \varphi + \tan \psi \right) \approx 2.278292
\end{equation}

This solution is highly non-trivial because it is non-smooth and piecewise defined. It has been discussed and analyzed in numerous papers \cite{Finch2004-1} \cite{Panraksa2007} \cite{Panraksa2007-1} \cite{Kübel2021} \cite{Movshovich2025} and is also well-known as the unit broadworm \cite{Gibbs2016} \cite{Khandhawit2013}. Finch’s paper \cite{Finch2004} and Wetzel’s paper \cite{Wetzel2003} have a separate section discussing this result and providing a detailed description.

The solution is applicable to many Lost-in-a-Forest shape configurations, particularly those with elongated shapes, (rectangles and isosceles triangles with small base angles) \cite{Finch2004}. Due to the duality between Bellman’s Lost-in-a-Forest Problem and Moser’s Worm Problem as Theorem 3 in Ref \cite{Finch2004}, this result also represents an open curve that can be covered by a unit strip \cite{Khandhawit2013} \cite{Wetzel2003} \cite{Adhikari1989}. However, in the existing literature, this shape has been derived exclusively through geometric methods. No analytical approach has been reported through a literature search.

In our previous papers \cite{Deng2024} \cite{Deng2026}, we proposed a general computational solution, together with proof for Bellman’s Lost-in-a-Forest Problem. The framework discretizes the problem as a Traveling Salesman Problem with Neighborhoods (TSPN). However, discretization introduces a large number of binary variables, making the resulting optimization problem computationally challenging. In this paper, we revisited the escape path for the infinite unit strip, reformulated the discretized problem in a continuous setting, and subsequently transformed it into a novel constrained functional minimization problem.

\section{Continuous transformation and solution by convex optimization}

Building on our previous general computational solution to Bellman’s Lost-in-a-Forest Problem \cite{Deng2024}, the discrete formulations for unit strip by TSPN can be written as
\begin{equation}
\begin{aligned}
\text{minimize} & \|p_{a_1}-O\| + \sum_{i=2}^{MN} \|p_{a_i}-p_{a_{i-1}}\|\\
\text{subject to:} & 
\left[
x_{a_h} \cos \frac{2\pi i}{N} + y_{a_h} \sin \frac{2\pi i}{N} + \frac{k-1}{M}
\right]
\cdot
\left[
x_{a_h} \cos \frac{2\pi i}{N} + y_{a_h} \sin \frac{2\pi i}{N} + \frac{k-1}{M} - 1
\right] = 0, \\
& \forall i \in \{0, 1, 2, \ldots, N-1\}, \, \forall k \in \{1, 2, \ldots, M\}, \, \exists h \in \{1, 2, \ldots, MN\}.
\end{aligned}
\end{equation}

This is a TSPN with $MN$ constraints, each involving two straight lines. If using Miller-Tucker-Zemlin (MTZ) formulation to solve, the discretization introduces a large number of binary variables, making the optimization computationally challenging. Hence, we will provide simpler continuous transformation.

We begin with the original discrete constraint. We have discrete path consist of points $\mathbf{p}_h = (x_{a_h}, y_{a_h})$ for $h \in \{1, 2, \ldots, MN\}$ and normal vector $\hat{\mathbf{n}}_i = (\cos \theta_i, \sin \theta_i)^\top$ where $\theta_i = \frac{2\pi i}{N}$. The scalar projection of point $\mathbf{p}_h$ onto the normal is $P(h, \theta_i) = \mathbf{p}_h \cdot \hat{\mathbf{n}}_i$. Then let $r_k = \frac{k-1}{M}$ uniformly samples in $[0, 1)$. The zero-product property of two parallel line (unit strip) in Eq (4) becomes:
\begin{equation}
P(h, \theta_i) = -r_k \quad or \quad P(h, \theta_i) = 1 - r_k
\end{equation}

As the limit $M,N \to \infty$. The discrete path converges to a continuous parameterized curve $\mathbf{r}(\tau) = (x(\tau), y(\tau))$ for $\tau \in [0, 2\pi]$. The discrete angles $\theta_i$ become a continuous variable $t \in [0, 2\pi]$, and the fractions $r_k$ become a continuous variable $r \in [0, 1]$.

\begin{theorem}
As $M, N \to \infty$, the discrete constraint necessitates that the continuous curve $\mathbf{r}(\tau)$ possesses a convex hull whose width in every direction is at least $1$, and which contains the origin $(0,0)$.
\end{theorem}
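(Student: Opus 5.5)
The plan is to read the discrete constraint as a statement about projections of the path onto each normal direction, then pass to the limit. For each $i$ and $k$ there is some index $h$ with $P(h,\theta_i)\in\{-r_k,\,1-r_k\}$. Geometrically, the strip $S_{i,k}=\{\mathbf{p}: -r_k\le \mathbf{p}\cdot\hat{\mathbf{n}}_i\le 1-r_k\}$ has width $1$ and contains the origin (its boundary offsets $-r_k\le 0\le 1-r_k$). The constraint says the discrete path meets the boundary of every such strip. Because the path starts at $O$, which lies inside $S_{i,k}$, and the polygonal path is connected, this is equivalent to saying the path leaves, or at least reaches the edge of, every width-one strip containing the origin in the sampled family. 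This is exactly the escape condition.

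\textbf{Step 1 (discrete statement).} Let $I_i=[\min_h P(h,\theta_i),\,\max_h P(h,\theta_i)]$ be the projection of the polygonal path (including $O$) onto $\hat{\mathbf{n}}_i$. This interval is also the projection of the convex hull, and it contains $0$. The constraint says that for every $k$ the interval $I_i$ reaches $-r_k$ or $1-r_k$, that is, $I_i$ is not contained in the open interval $(-r_k,1-r_k)$. This is the interval-cover reformulation.

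\textbf{Step 2 (limit).} Take a subsequence along which the paths converge uniformly to $\mathbf{r}(\tau)$. This is possible by Arzel\`a--Ascoli after reparametrizing by arclength, assuming the lengths are bounded, which holds for minimizing sequences. Projections and convex hulls depend continuously on the path in the Hausdorff metric. Since $\{\theta_i\}$ becomes dense in $[0,2\pi]$ and $\{r_k\}$ dense in $[0,1]$, the limit satisfies the following: for every $t\in[0,2\pi]$ and every $r\in[0,1]$, the projection interval $I(t)=[a(t),b(t)]\ni 0$ satisfies $a(t)\le -r$ or $b(t)\ge 1-r$. The closedness of this condition lets it pass to the limit.

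\textbf{Step 3 (width).} Fix $t$ and suppose $w(t)=b(t)-a(t)<1$. Since $a\le 0\le b$, we may choose $r\in(-a(t),\,1-b(t))$, which is a nonempty subinterval of $[0,1]$. Then $-r<a(t)$ and $1-r>b(t)$, which contradicts Step 2. Hence the width of the convex hull in direction $t$ is at least $1$. Containment of the origin is immediate, since $O$ is the path's starting point and therefore a hull point; equivalently, $0\in I(t)$ for all $t$. The main obstacle is Step 2. The difficulty is handling the existential index $h$ uniformly when passing to the limit, and doing so with a strict inequality at the endpoints. I would handle it by replacing equality with the closed condition "$\max\ge 1-r$ or $\min\le -r$", which is preserved under uniform convergence and density of the grid.
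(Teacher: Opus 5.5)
There is a gap in how the origin enters your argument. In the paper, the discrete constraint is imposed only on the vertices $\mathbf{p}_h$, $h\in\{1,\dots,MN\}$. The start $O$ is not among them. The limit curve $\mathbf{r}(\tau)$ is joined to $O$ only through the separate cost terms $\|p_{a_1}-O\|$ in Eq.~(4) and $\|\mathbf{r}(0)\|$ in Eq.~(19). So ``the convex hull contains the origin'' is a genuine conclusion about $\mathbf{r}$; it is what produces the constraint $\mathbf{r}(t)\cdot\hat{\mathbf{n}}(t)\ge 0$.

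In Step~1 you put $O$ into the path and then relax the hit condition to the one-sided form ``$a(t)\le -r$ or $b(t)\ge 1-r$''. That form agrees with ``$-r$ or $1-r$ lies in $[a(t),b(t)]$'' only when $0\in[a(t),b(t)]$, which you secured by adjoining $O$. As a condition on $\mathbf{r}$ alone it is strictly weaker. For example, take $\mathbf{r}$ to be a circle of diameter $1$ centred at $(10,0)$. With $O$ adjoined it satisfies your condition for every $t$ and $r$. Yet its own projection onto $\hat{\mathbf{n}}(0)$ is $[9.5,10.5]$, which contains neither $-r$ nor $1-r$ for any $r\in[0,1]$.

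So your argument proves width at least $1$ for the hull of $\mathbf{r}$ with $O$ adjoined, and makes origin containment true by construction. It does not show that the hull of $\mathbf{r}$ itself contains $O$ or has width at least $1$. If the curve is read as starting at $O$, your proof is complete, but that is not the paper's $\mathbf{r}$.

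The repair is short and is essentially the paper's argument. Keep the two-sided condition for the projection $[a(t),b(t)]$ of $\mathbf{r}$ alone: for all $t$ and all $r\in[0,1]$, $-r\in[a(t),b(t)]$ or $1-r\in[a(t),b(t)]$. This is a union of two closed conditions, so your Step~2 passes it to the limit unchanged. Then derive $a(t)\le 0\le b(t)$ first.
\begin{itemize}
\item If $a(t)>0$, no $-r$ with $r\in[0,1]$ lies in the interval, so $1-r$ must lie in it for every $r$. That forces $[0,1]\subseteq[a(t),b(t)]$, hence $a(t)\le 0$, a contradiction.
\item The case $b(t)<0$ is symmetric.
\end{itemize}
Since this holds for every $t$, $O$ lies in every supporting half-plane of the closed hull, hence in the hull. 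Your Step~3 gap argument then applies verbatim and gives $b(t)-a(t)\ge 1$. Together these are the paper's conditions $m(t)\le 0\le M(t)$ and $M(t)-m(t)\ge 1$, extracted from $[0,1]\subseteq[-M,-m]\cup[1-M,1-m]$.

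Your Step~2, using Arzel\`a--Ascoli and the closedness of the condition under grid refinement, is more careful than the paper's treatment, which simply posits the continuous constraint. It is worth keeping.
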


\begin{proof}
In the continuous limit, the discrete constraints $\forall i \in \{0, 1, 2, \ldots, N-1\}, \, \forall k \in \{1, 2, \ldots, M\}, \, \exists h \in \{1, 2, \ldots, MN\}$ form a mapping $[0, 2\pi] \times [0, 1] \to [0, 2\pi]$. The projection is $P_t(\tau) = \mathbf{r}(\tau) \cdot \hat{\mathbf{n}}(t)$. Then the continuous constraint of unit strip becomes
\begin{equation}
\label{eq:cont_logic}
\forall t \in [0, 2\pi], \, \forall r \in [0, 1], \, \exists \tau \in [0, 2\pi] \text{ s.t. } P_t(\tau) \in \{-r, 1-r\}
\end{equation}

As $\mathbf{r}(\tau)$ is a connected curve, its projection $P_t(\tau)$ for a fixed angle $t$ must map to a closed interval $I_t \subset \mathbb{R}$ by the Intermediate Value Theorem. Let this interval be defined by minimum and maximum
\begin{equation}
I_t = [m(t), M(t)] = \left[ \min_{\tau} P_t(\tau), \, \max_{\tau} P_t(\tau) \right]
\end{equation}
Eq \eqref{eq:cont_logic} states that for every $r \in [0, 1]$, the interval $I_t$ must contain either $-r$ or $1-r$. The sets of values required
\begin{align}
-r \in I_t \iff  r \in [-M(t), -m(t)] \\
1-r \in I_t \iff r \in [1-M(t), 1-m(t)]
\end{align}

For the condition to hold for \textit{all} $r \in [0, 1]$, the union of these two intervals must completely cover the unit interval $[0, 1]$, thus
\begin{equation}
\label{eq:union}
[0, 1] \subseteq [-M(t), -m(t)] \cup [1-M(t), 1-m(t)]
\end{equation}

Notice that second interval is exactly $+1$ shift of first interval. For the union of two closed intervals to form a continuous cover of $[0, 1]$ without any gaps, we have
\begin{equation}
-m(t) \ge 1 - M(t) \implies M(t) - m(t) \ge 1
\end{equation}
\begin{equation}
-M(t) \le 0 \implies M(t) \ge 0
\end{equation}
\begin{equation}
1-m(t) \ge 1 \implies m(t) \le 0
\end{equation}
These are the continuous constraints.
\end{proof}

To optimize the escape path, we solve the minimum length satisfying the bounds. 
\begin{corollary}
When the convex hull is parameterized by the normal angle $t \in [0, 2\pi]$ of its boundary, the derived topological bounds precisely yield the non-local antipodal constraints.
\end{corollary}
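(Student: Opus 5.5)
We parameterize the convex hull $K=\mathrm{conv}\{\mathbf{r}(\tau)\}$ by its support function. For each normal angle $t\in[0,2\pi]$ set
\begin{equation*}
h(t)=\max_{\tau}\mathbf{r}(\tau)\cdot\hat{\mathbf{n}}(t)=M(t).
\end{equation*}
This equality holds because the maximum of a linear functional over a convex hull is attained on the generating set. Since $\hat{\mathbf{n}}(t+\pi)=-\hat{\mathbf{n}}(t)$, the minimum of the projection is
\begin{equation*}
m(t)=\min_\tau \mathbf{r}(\tau)\cdot\hat{\mathbf{n}}(t)=-\max_\tau \mathbf{r}(\tau)\cdot\hat{\mathbf{n}}(t+\pi)=-h(t+\pi).
\end{equation*}
This identity is the key step: it converts the pair $(m(t),M(t))$ at a single direction into values of the single function $h$ at the two antipodal angles $t$ and $t+\pi$. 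Constraints on $h$ at $t$ are thereby coupled to $h$ at $t+\pi$, which is exactly what "non-local" means here.

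Next we substitute into the three inequalities obtained in the proof of the preceding Theorem. The width condition $M(t)-m(t)\ge 1$ becomes $h(t)+h(t+\pi)\ge 1$ for all $t$, the antipodal width constraint. The condition $M(t)\ge 0$ becomes $h(t)\ge 0$. The condition $m(t)\le 0$ becomes $-h(t+\pi)\le 0$, i.e.\ $h(t+\pi)\ge 0$, which is the same positivity constraint shifted by $\pi$. Since $t$ ranges over the whole circle, the set of constraints is therefore
\begin{equation*}
h(t)\ge 0,\qquad h(t)+h(t+\pi)\ge 1,\qquad \forall t\in[0,2\pi],
\end{equation*}
with $h$ $2\pi$-periodic. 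Positivity of $h$ in every direction is equivalent to $0\in K$, by the separating hyperplane theorem applied to the closed convex set $K$. This recovers the "contains the origin" clause of the Theorem.

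To justify calling $t$ the normal angle of the boundary, we note the standard facts for a convex body whose support function is regular. The boundary point with outer normal $\hat{\mathbf{n}}(t)$ is $h(t)\hat{\mathbf{n}}(t)+h'(t)\hat{\mathbf{n}}'(t)$, and its radius of curvature is $h+h''\ge 0$ in the distributional sense. Convexity is thus encoded by the constraint $h+h''\ge 0$. The perimeter is $\int_0^{2\pi}h\,dt$ by Cauchy's formula, which is what makes the subsequent minimization linear in $h$.

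The main obstacle is regularity. $K$ may have corners, where a whole interval of normals $t$ maps to one boundary point, and flat edges, where $h$ is not differentiable. I would handle this by working with $h$ as a continuous function and interpreting $h+h''\ge 0$ as a nonnegative measure. The derivation of the antipodal constraints uses only the identities $M=h(t)$ and $m=-h(t+\pi)$, which need no smoothness, so the corollary itself holds for arbitrary (possibly non-smooth) convex hulls.
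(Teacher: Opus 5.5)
Your argument is correct and follows the paper's proof: both identify $M(t)$ with the support value at $t$ and $m(t)$ with minus the support value at $t+\pi$, substitute into the three inequalities, and note that $m(t)\le 0$ repeats $M(t)\ge 0$ shifted by $\pi$. Working with the support function $h(t)=\mathbf{r}(t)\cdot\hat{\mathbf{n}}(t)$ instead of a chosen support point is slightly cleaner, since it needs no unique maximizer and gives $m(t)=-h(t+\pi)$ with the right sign (the paper's intermediate line $m(t)=\mathbf{r}(t+\pi)\cdot\hat{\mathbf{n}}(t+\pi)$ drops this minus sign, though its final constraints are correct); your remarks on $h+h''$ and Cauchy's formula are not needed for the corollary.
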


\begin{proof}
Let the convex hull boundary be parameterized such that $\mathbf{r}(t)$ is the point maximizing the projection in direction $t$. Therefore:
\begin{equation}
M(t) = \mathbf{r}(t) \cdot \hat{\mathbf{n}}(t)
\end{equation}
Similarly, the minimum projection in direction $t$ occurs at the exact opposite side of the convex hull, which is in direction $t+\pi$. Therefore:
\begin{equation}
m(t) = \mathbf{r}(t+\pi) \cdot \hat{\mathbf{n}}(t+\pi)
\end{equation}
Since $\hat{\mathbf{n}}(t+\pi) = -\hat{\mathbf{n}}(t)$, we have $m(t) = -\mathbf{r}(t+\pi) \cdot \hat{\mathbf{n}}(t)$. 

Substituting into Eq (11-13) we have
\begin{equation}
M(t) - m(t) \ge 1 \implies (\mathbf{r}(t) - \mathbf{r}(t+\pi)) \cdot \hat{\mathbf{n}}(t) \ge 1
\end{equation}
\begin{equation}
M(t) \ge 0 \implies \mathbf{r}(t) \cdot \hat{\mathbf{n}}(t) \ge 0
\end{equation}
\begin{equation}
m(t) \le 0 \implies \mathbf{r}(t+\pi) \cdot \hat{\mathbf{n}}(t+\pi) \ge 0
\end{equation}
The last one is equivalent to the second-to-last one over the domain $[0, 2\pi]$.

This concludes the rigorous mapping from the discrete constraints to the variational antipodal constraints.
\end{proof}

Therefore, the continuous formulation of escape path for unit strip is: 

\begin{equation}
\begin{aligned}
\text{minimize} & \|\mathbf{r}(0)\| + \int_{0}^{2\pi} \|\mathbf{r}'(t)\| \, dt\\
\text{subject to:} & 
(\mathbf{r}(t) - \mathbf{r}(t+\pi)) \cdot \hat{\mathbf{n}}(t) \ge 1, \quad \forall t \in [0, \pi)\\
& \mathbf{r}(t) \cdot \hat{\mathbf{n}}(t) \ge 0, \quad \forall t \in [0, 2\pi) 
\end{aligned}
\end{equation}
where $\hat{\mathbf{n}}(t) = (\cos t, \sin t)^\top$.

As the optimal path may contain non-smooth cusps at $\pi$, the direct application of Calculus of Variations and Euler-Lagrange (EL) equations for Eq (19) is difficult. Instead, we discretize the functional minimization using finite difference method.

We discretize the angular domain $[0, 2\pi]$ into an even number of $\hat{N}$ nodes. Let $\Delta t = \frac{2\pi}{\hat{N}}$, $t_i=\frac{2\pi i}{\hat{N}}$. The discrete finite difference optimization becomes
\begin{equation}
\begin{aligned}
\text{minimize} & \sqrt{x_1^2 + y_1^2} + \sum_{i=1}^{\hat{N}-1} \sqrt{(x_{i+1} - x_i)^2 + (y_{i+1} - y_i)^2}\\
\text{subject to:} & 
(x_i - x_{i + \hat{N}/2}) \cos t_i + (y_i - y_{i + \hat{N}/2}) \sin t_i \ge 1, \quad \forall i \in \{1, 2, \ldots, \hat{N}/2\} \\
& x_i \cos t_i + y_i \sin t_i \ge 0, \quad \forall i \in \{1, 2, \ldots, \hat{N}\}
\end{aligned}
\end{equation}

Based on the finite difference form above in Eq (20), it is convex optimization problem that we can solve. Mathematica 14.3 code for the solution is in Appendix 1. The resulting escape path for infinite unit strip is shown in Figure 1, the same as Zalgaller's \cite{Finch2004} \cite{Zalgaller2005}.

\begin{figure}
    \centering
    \includegraphics[width=0.5\linewidth]{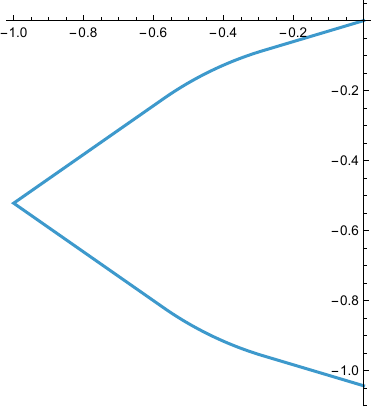}
    \caption{Resulting escape path for infinite unit strip}
\end{figure}

\section{Extend to $K$-Segment escape path}

Specifically in this section, we find the minimum length of polygonal curve with $K$ segments (defined by $K+1$ vertices). The geometric method was discussed in previous paper \cite{Finch2004-1}.

Let the curve be defined by an ordered set of vertices $\mathcal{P} = \{P_0=O, P_1, \dots, P_K\}$. And we use polar coordinates. The position $P_i$ for $i \in \{1, \dots, K\}$ is given by:
\begin{equation}
    P_i = (R_i \cos \phi_i, R_i \sin \phi_i)
\end{equation}

The objective is to minimize the total length $L$:
\begin{equation}
    \text{minimize}_{R, \phi} L = R_1 + \sum_{i=2}^{K} \sqrt{R_i^2 + R_{i-1}^2 - 2R_i R_{i-1} \cos(\phi_i - \phi_{i-1})}
\end{equation}

We utilize a rotational projection operator. The projection of vertex $P_i$ onto angle $t$ is:
\begin{equation}
    \text{proj}_i(t) = R_i \cos \phi_i \cos t + R_i \sin \phi_i \sin t = R_i \cos(t - \phi_i)
\end{equation}

The constraints in Eq (11) yield:
\begin{equation}
    \max_{i \in \{0, \dots, K\}} [\text{proj}_i(t)] - \min_{i \in \{0, \dots, K\}} [\text{proj}_i(t)] \ge 1, \quad \forall t \in [0, \pi)
\end{equation}

Figure 2 shows the results for K-segment cases with above objective function and constraints. Particularly, for the 2-segment case, the total length is $4\sqrt{3}/3$ and the angle between the two lines is $\pi/3$.

\begin{figure}
    \centering
    \includegraphics[width=0.49\linewidth]{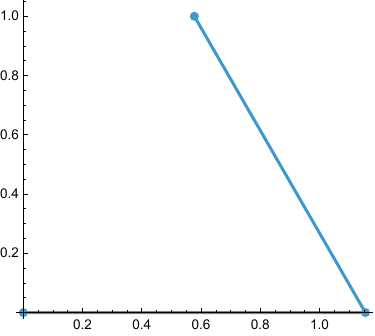}
    \includegraphics[width=0.49\linewidth]{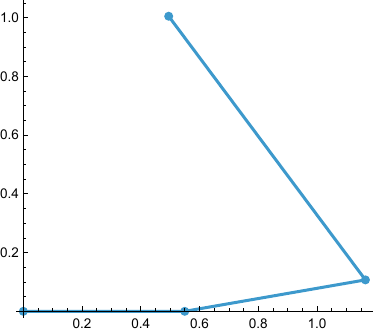}
    \includegraphics[width=0.49\linewidth]{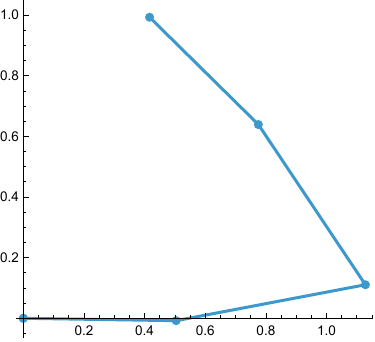}
    \caption{Results for 2, 3, 4-segment cases of escape path for infinite unit strip}
\end{figure}

\section{Extend to closed escape path}

In this section, we seek to find the closed escape path for unit strip. It is to find minimium of the following functional similar to Eq (19):

\begin{equation}
\begin{aligned}
\text{minimize} &\mathcal{J}[\mathbf{r}]= \|\mathbf{r}(0)\| + \int_{0}^{2\pi} \|\mathbf{r}'(t)\| \, dt+\|\mathbf{r}(2\pi)\| \\
\text{subject to:} & 
(\mathbf{r}(t) - \mathbf{r}(t+\pi)) \cdot \hat{\mathbf{n}}(t) \ge 1, \quad \forall t \in [0, \pi)\\
& \mathbf{r}(t) \cdot \hat{\mathbf{n}}(t) \ge 0, \quad \forall t \in [0, 2\pi) 
\end{aligned}
\end{equation}
We will prove that the result is a curve of constant unit width anchored at origin.

\begin{theorem}
Let $\mathbf{r}: [0, 2\pi] \to \mathbb{R}^2$ be a sufficiently smooth closed curve parameterized by $t$, with unit normal vector $\hat{\bm{n}}(t)$. The solution to the minimization above is a curve of constant unit width, achieving an absolute minimum functional value of $\pi$.
\end{theorem}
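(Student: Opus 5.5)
The plan is to bypass the Euler--Lagrange equations entirely and use a direct lower bound from integral geometry, namely Cauchy's perimeter formula for convex bodies, together with an explicit minimizer that attains it. First I will reinterpret the functional $\mathcal{J}[\mathbf{r}]$. The terms $\|\mathbf{r}(0)\|$ and $\|\mathbf{r}(2\pi)\|$ are the lengths of the segments joining the origin $O$ to the start and end of the curve. So $\mathcal{J}[\mathbf{r}]$ is exactly the length of the closed loop $\Gamma$ formed by the segment $O\mathbf{r}(0)$, the curve $\mathbf{r}$, and the segment $\mathbf{r}(2\pi)O$.

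By Theorem 1 and its Corollary, the constraints say that the convex hull $C=\mathrm{conv}(\Gamma)$ contains $O$ and has width $w(t)=M(t)-m(t)\ge 1$ for every direction $t$. Adding the two segments through $O$ only enlarges the hull, so the width condition is preserved.

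The key steps, in order, are the following.
\begin{enumerate}
\item I will show that the length of any closed curve is at least the perimeter of its convex hull. The standard argument is that for each direction $t$, a closed curve must travel from its minimum projection $m(t)$ to its maximum projection $M(t)$ and back. Hence $\int_0^{2\pi}|\mathbf{r}'(\tau)\cdot\hat{\mathbf{n}}(t)|\,d\tau \ge 2w(t)$.
\item I will integrate this inequality over $t\in[0,\pi)$ and use $\int_0^{\pi}|\mathbf{v}\cdot\hat{\mathbf{n}}(t)|\,dt = 2\|\mathbf{v}\|$ (Crofton/Cauchy). This yields
\begin{equation*}
\mathcal{J}[\mathbf{r}] = \mathrm{length}(\Gamma) \ge \int_0^{\pi} w(t)\,dt \ge \pi .
\end{equation*}
\item I will exhibit an admissible curve attaining the value $\pi$. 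The simplest choice is the circle of diameter $1$ passing through the origin, $\mathbf{r}(t)=\tfrac12\hat{\mathbf{n}}(t)+\tfrac12\hat{\mathbf{n}}(t_0)$, started and ended at $O$. Then $\|\mathbf{r}(0)\|=\|\mathbf{r}(2\pi)\|=0$, the width is identically $1$, and the origin lies on the boundary. I will verify the antipodal constraint $(\mathbf{r}(t)-\mathbf{r}(t+\pi))\cdot\hat{\mathbf{n}}(t)=1$ and the support condition $\mathbf{r}(t)\cdot\hat{\mathbf{n}}(t)\ge 0$ directly. This shows the minimum value is $\pi$.
\item I will characterize the minimizers through the equality cases. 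Equality in the second inequality of step 2 forces $w(t)\equiv 1$, i.e.\ constant unit width. Equality in step 1 forces $\Gamma$ to traverse the boundary of $C$ exactly once, monotonically in each projection. Hence the minimizer is a convex curve of constant width $1$ passing through $O$, for example a Reuleaux triangle anchored at a vertex. By Barbier's theorem all such curves have length $\pi$, consistent with the bound.
\end{enumerate}

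The step I expect to be the main obstacle is the rigorous equality case in step 1, since the statement only assumes $\mathbf{r}$ is ``sufficiently smooth.'' I would use the Crofton formulation: length equals $\tfrac14\int_0^{2\pi}\int_{\mathbb{R}} n(t,s)\,ds\,dt$, where $n(t,s)$ counts intersections of $\Gamma$ with the line $\{x\cdot\hat{\mathbf{n}}(t)=s\}$. Every line that meets the interior of $C$ hits the closed curve $\Gamma$ at least twice, so equality forces almost every such line to meet $\Gamma$ exactly twice. That makes $\Gamma=\partial C$, which settles the characterization.
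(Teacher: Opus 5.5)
Your proof is correct, but it follows a different route from the paper.

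\textbf{The paper's route.} It forms an augmented Lagrangian and reads off the Euler--Lagrange equation $d\bm{T}/dt=-\lambda(t)\hat{\mathbf{n}}(t)$. From this it concludes that a minimizer is convex with $t$ as its outward-normal angle. Only then does it compute with the support function $p$: $L=\int_0^{2\pi}(p+p'')\,dt=\int_0^{\pi}w(t)\,dt\ge\pi$, with equality exactly for constant width (Barbier). Finally it removes the boundary terms by imposing $p(0)=p'(0)=0$.

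\textbf{Your route.} You fold the two boundary terms into the closed loop $\Gamma$. You then prove $\mathrm{length}(\Gamma)\ge\int_0^{\pi}w(t)\,dt$ for \emph{every} admissible rectifiable curve via Cauchy's projection formula. For a convex curve parametrized by normal angle, this is precisely the paper's identity $\int_0^{2\pi}p\,dt=\int_0^{\pi}w\,dt$.

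\textbf{What each buys.} Your argument gives a genuine global lower bound. It needs no smoothness and no a priori existence of a minimizer. It also avoids relying on a first-order necessary condition to obtain convexity, and your explicit circle shows the value $\pi$ is attained. The paper's approach, in exchange, fits the variational framework used in the rest of the paper. It also states the anchoring condition explicitly as $p(0)=p'(0)=0$.

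\textbf{Details to tighten.}
\begin{enumerate}
\item The width bound for an arbitrary admissible $\mathbf{r}$ is the converse of what the Corollary states. It does follow in one line: $M(t)-m(t)\ge(\mathbf{r}(t)-\mathbf{r}(t+\pi))\cdot\hat{\mathbf{n}}(t)\ge 1$.
\item The circle must be taken with $t_0=\pi$ so that $\mathbf{r}(0)=\mathbf{r}(2\pi)=O$.
\item In step 4, invoke the strict convexity of bodies of constant width. Since $\Gamma=\partial C$ and $\partial C$ contains no nondegenerate segment, the segments $O\mathbf{r}(0)$ and $\mathbf{r}(2\pi)O$ must be degenerate. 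This is what makes $\mathbf{r}$ itself, and not just $\Gamma$, the constant-width curve.
\item Strict convexity also gives $\partial C\subseteq\Gamma$ immediately, because every boundary point of $C$ is extreme and extreme points of $\mathrm{conv}(\Gamma)$ lie on $\Gamma$. This settles the equality case of step 1 more easily than the Crofton multiplicity count.
\end{enumerate}
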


\begin{proof}
We introduce Lagrange multipliers $\lambda(t)$ for $t \in [0, \pi]$ and $\mu(t)$ for $t \in [0, 2\pi]$ to incorporate the constraints. The augmented functional is given by
\begin{equation}
    \mathcal{L}[\mathbf{r}] = \int_0^{2\pi} \|\mathbf{r}'(t)\| \, dt - \int_0^\pi \lambda(t) \big[ (\mathbf{r}(t) - \mathbf{r}(t+\pi)) \cdot \hat{\bm{n}}(t) - 1 \big] dt - \int_0^{2\pi} \mu(t) \mathbf{r}(t) \cdot \hat{\bm{n}}(t) \, dt.
\end{equation}

By introducing the variation $\mathbf{r}(t) \to \mathbf{r}(t) + \epsilon \bm{\eta}(t)$, we obtain the EL equation:
\begin{equation}
    \frac{d\bm{T}}{dt} = - \lambda(t) \hat{\mathbf{n}}(t)
\end{equation}
where $\bm{T}(t) = \frac{\mathbf{r}'(t)}{\|\mathbf{r}'(t)\|}$ is the unit tangent vector. The alignment of $\frac{d\bm{T}}{dt}$ with $\hat{\mathbf{n}}(t) = (\cos t, \sin t)$ rigorously forces $\bm{T}(t) = (-\sin t, \cos t)$. This establishes that the parameter $t$ is exactly the angle of the outward normal, confirming the curve is convex. Therefore, we have the support function $p(t) = \mathbf{r}(t) \cdot \hat{\mathbf{n}}(t)$

The geometric width of the curve in direction $\hat{\mathbf{n}}(t)$ is simply the sum of opposite support functions $w(t) = p(t) + p(t+\pi)$. Thus, the constraints translate exactly to
\begin{equation}
    w(t) \ge 1 \quad \text{and} \quad p(t) \ge 0
\end{equation}

As the radius of curvature is $p(t) + p''(t)$, the total length of the closed curve is 
\begin{equation}
    L=\int_0^{2\pi} (p(t) + p''(t)) \, dt
\end{equation}
As $p(t)$ is periodic for a closed curve, the integral of $p''(t)$ evaluates to zero. We decompose the remaining integral over half-periods:
\begin{equation}
    L = \int_0^{2\pi} p(t) \, dt = \int_0^\pi p(t) \, dt + \int_0^\pi p(t+\pi) \, dt = \int_0^\pi w(t) \, dt
\end{equation}
Since the constraint mandates $w(t) \ge 1$, the integral is strictly minimized when $w(t) = 1, \forall t \in [0, \pi]$. This condition perfectly defines a curve of constant unit width. By Barbier's Theorem, the perimeter is exactly $\pi$, which is the minimum. 

Using the support function, the position vector is $\mathbf{r}(t) = p(t)\hat{\mathbf{n}}(t) + p'(t)\bm{T}(t)$. Substituting minimized length into the original functional yields:
\begin{equation}
    \|\mathbf{r}(0)\|+ \pi +\|\mathbf{r}(2\pi)\|
\end{equation}
This theoretical limit is achieved if and only if both boundary terms independently vanish:
\begin{equation}
    p(0) = 0 \quad \text{and} \quad p'(0) = 0
\end{equation}
With periodicity, $p(2\pi) = p'(2\pi) = 0$, guaranteeing that $\|\mathbf{r}(2\pi)\| = 0$.

\end{proof}

\section{Appendix-Mathematica notebook}
The appendix provide Mathematica notebook that contain detailed convex optimization and results presented in Section 3.

~\\
College of Engineering and Computer Science, University of Central Florida, Orlando, FL, USA

Email: \underline{zhipeng.deng@ucf.edu}

\includepdf[pages=-]{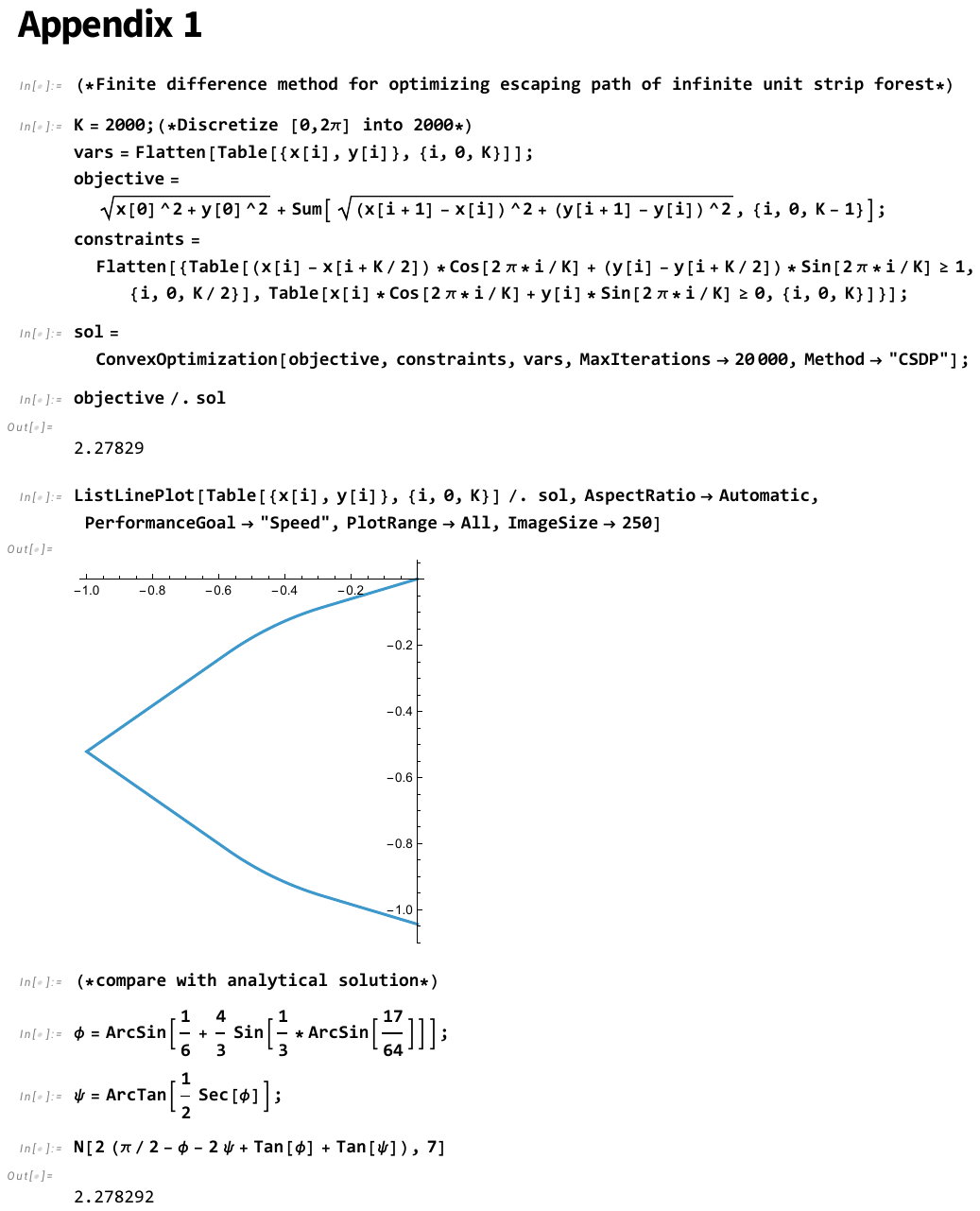}

\end{document}